\newtheorem{conj}{Conjecture}
\newtheorem{thm}{Theorem}[section]
\newtheorem{lemm}[thm]{Lemma}
\newtheorem{cor}[thm]{Corollary}
\newtheorem{prop}[thm]{Proposition}
\theoremstyle{remark}
\newtheorem{rmk}[thm]{Remark}
\theoremstyle{definition}
\newtheorem{defi}[thm]{Definition}
\newtheorem*{ex}{Example}
\newcommand{\lapla}{\bigtriangleup}
\newcommand{\p}{\phi}
\title{Biminimal properly immersed submanifolds \\ in the Euclidean spaces}
\author{Shun Maeta}
\thanks{
Division of Mathematics, GSIS, Tohoku University, 
Sendai 980-8579, Japan.~\\
supported in part by 
Research Fellowships of the Japan Society for the Promotion of Science for Young Scientists, No.~23-6949.~\\
e-mail:~shun.maeta@gmail.com}
\thanks{2010~{\em Mathematics Subject Classification.}~primary 58E20, secondary 53C43, 53A07}
\date{January, 13,  2012.} 
\begin{document} 
\maketitle 
\markboth{Biminimal properly immersed submanifolds in the Euclidean spaces} 
{Shun Maeta}

\begin{abstract} 
We consider a {\it complete nonnegative biminimal} submanifold  $M$ (that is, a complete biminimal submanifold with $\lambda\geq0$)
in a Euclidean space $\mathbb{E}^N$. 
Assume that the immersion is {\it proper}, that is, 
the preimage of every compact set in $\mathbb{E}^N$ is also compact in $M$. 
Then, we prove that $M$ is minimal. 
From this result, we give an affirmative partial answer to Chen's conjecture.
For the case of $\lambda<0$, we construct examples of biminimal submanifolds and curves.
\end{abstract}

\section{\bf Introduction}\label{intro} 
Theory of harmonic maps has been applied into various fields in differential geometry.
 The harmonic maps between two Riemannian manifolds are
 critical maps of the energy functional $E(\p)=\frac{1}{2}\int_M\|d\p\|^2v_g$, for smooth maps $\p:(M^n,g)\rightarrow (\tilde{M}^N,\langle\ ,\ \rangle)$.

On the other hand, in 1981, J. Eells and L. Lemaire \cite{jell1} proposed the problem to consider the {\em polyharmonic maps of order k{ $($\em{$k$-harmonic maps}$)$:
 they are critical maps of the functional 
 \begin{align*}
 E_{k}(\p)=\int_Me_k(\p)v_g,\ \ (k=1,2,\dotsm),
 \end{align*}
 where $e_k(\p)=\frac{1}{2}\|(d+d^*)^k\p\|^2$ for smooth maps $\p:(M^n,g)\rightarrow (\tilde{M}^N,\langle\ ,\ \rangle)$.
G.Y. Jiang \cite{jg1} studied the first and second variational formulas of the bi-energy $E_2$, 
and critical maps of $E_2$ are called {\em biharmonic maps} ({\em 2-harmonic maps}). There have been extensive studies on biharmonic maps.
The Euler-Lagrange equation of $E_2$ is 
$$\tau_2(\p):=-\lapla^\p\tau(\p)-\sum^n_{i=1}R^{\tilde{M}}(\tau(\p),d\p(e_i))d\p(e_i)=0,$$
where 
$\lapla^{\p}:=\sum^n_{i=1}(\nabla^{\p}_{e_i}\nabla^{\p}_{e_i}-\nabla^{\p}_{\nabla_{e_i}e_i})$, 
$\tau(\p):=\text{trace}\nabla d\p$, $R^{\tilde{M}}$ and $\{e_i\}$ are the rough Laplacian, the tension field of $\p$, the Riemannian curvature of $\tilde{M}$ i.e., $R^{\tilde{M}}(X,Y)Z:=[\nabla_{X},\nabla_Y]Z-\nabla_{[X,Y]}Z$ for any vector field X, Y and Z on $\tilde{M}$, and a local orthonormal frame field of $M$, respectively.
If an isometric immersion $\p:(M,g)\rightarrow (\tilde{M},\langle \ , \ \rangle)$ is biharmonic, then $M$ is called {\em biharmonic submanifold}.

For biharmonic submanifolds, 
there is an interesting problem, namely, Chen's Conjecture 
(cf.~\cite{Chen}): 

\begin{conj} 
Any biharmonic submanifold $M$ in $\mathbb{E}^N$ is minimal. 
\end{conj}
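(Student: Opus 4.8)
The plan is to prove the conjecture directly, establishing $\vec H\equiv 0$ for every biharmonic isometric immersion $\p\colon M^n\to\mathbb{E}^N$. The first step is to exploit the flatness of the target. Since $R^{\mathbb{E}^N}=0$, the Euler--Lagrange equation collapses to $\lapla^\p\tau(\p)=0$, and because $\tau(\p)=n\vec H$ while $\p^{-1}T\mathbb{E}^N$ is a trivial flat bundle, this is equivalent to the statement that each Euclidean component $H^A$ ($A=1,\dots,N$) of the mean curvature vector is a harmonic function on $(M,g)$, i.e.\ $\lapla H^A=0$. Thus biharmonicity is encoded in the single assertion that $\vec H$ is a harmonic $\mathbb{E}^N$-valued map.

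From here I would pass to the scalar quantity $f:=|\vec H|^2=\sum_A (H^A)^2$ and compute its Laplacian. Using $\lapla H^A=0$ componentwise,
\begin{align*}
\tfrac12\,\lapla f=\sum_A H^A\,\lapla H^A+\sum_A|\nabla H^A|^2=|\nabla^\p\vec H|^2\geq 0,
\end{align*}
so $f$ is a nonnegative subharmonic function on $M$ whose vanishing is exactly the minimality we seek. The problem is thereby reduced to a Liouville-type assertion: the only nonnegative subharmonic $f$ that can arise this way is $f\equiv 0$.

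When $M$ is compact the conclusion is immediate and unconditional: integrating $\lapla x^A=nH^A$ over $M$ gives $\int_M H^A=0$, while $H^A$ harmonic on a closed manifold forces $H^A$ constant, so $H^A\equiv 0$ for every $A$. The whole content of the conjecture is therefore the noncompact case, and this is exactly where I expect the main obstacle. On a general noncompact $M$ a positive subharmonic function need not be constant, so subharmonicity of $f$ alone does not close the argument. I would try to force constancy of $f$ by a cutoff integration of $\tfrac12\lapla f=|\nabla^\p\vec H|^2$ (a Stokes/Gaffney-type argument), aiming first to deduce $\nabla^\p\vec H=0$, hence that $\vec H$ is parallel with $f\equiv c$, and then to rule out $c\neq 0$ by extracting a contradiction from the growth of the position map associated with a constant nonzero mean curvature. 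The delicate point is that controlling $f$ near infinity requires genuine geometric input at infinity (finiteness of an appropriate integral of $f$, or completeness together with a bound on the growth of the relevant boundary terms); isolating the structure that makes the cutoff argument terminate, and showing it holds for every biharmonic immersion, is the crux on which I would concentrate.
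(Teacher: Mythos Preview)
The statement you are trying to prove is Chen's Conjecture, and the paper does \emph{not} prove it. It is stated as an open conjecture; what the paper actually establishes is the special case of a \emph{properly immersed} submanifold (Theorem~3.2 and its Corollary). So there is no ``paper's own proof'' to compare against, and your difficulty in the noncompact case is not a lacuna in your write-up but the genuine open content of the problem.

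That said, your outline misses the key quantitative strengthening that drives the paper's argument in the proper case. You stop at
\[
\tfrac12\,\lapla f=|\nabla^{\p}\vec H|^2\ge 0,
\]
i.e.\ mere subharmonicity of $f=|\vec H|^2$. But $\nabla^{\p}_X\vec H=-A_{\vec H}X+\nabla^{\perp}_X\vec H$, so $|\nabla^{\p}\vec H|^2=|A_{\vec H}|^2+|\nabla^{\perp}\vec H|^2\ge |A_{\vec H}|^2$, and since $\mathrm{tr}\,A_{\vec H}=n|\vec H|^2$ one gets by Cauchy--Schwarz the nonlinear inequality
\[
\lapla f\ \ge\ \tfrac{2}{n}\,f^{2},
\]
which is exactly Lemma~3.1 of the paper. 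This is much stronger than $\lapla f\ge 0$: it places $f$ in the Cheng--Yau regime, and the paper then runs a Cheng--Yau maximum-principle argument with the auxiliary function $F_\rho=(\rho^2-|{\bf x}|^2)^2 f$ on extrinsic balls. The properness hypothesis is used precisely to guarantee that ${\bf x}^{-1}(\overline{B_\rho})$ is compact so that $F_\rho$ attains an interior maximum; the nonlinear lower bound then forces $F_\rho\le c(n)\rho^2$ and hence $f\equiv 0$ after $\rho\to\infty$.

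Your proposed route---a Gaffney/Stokes cutoff to force $\nabla^{\p}\vec H=0$ and then rule out a nonzero parallel $\vec H$---does not close without an a priori integrability or growth hypothesis on $f$, and you correctly identify this as the obstruction. No such hypothesis is available for a general biharmonic immersion; producing it \emph{is} the conjecture. In short: your reduction is sound, your compact case is fine, but the noncompact step is a genuine gap that the paper does not fill either---it circumvents it by assuming properness and exploiting the inequality $\lapla f\ge \tfrac{2}{n}f^2$ that your write-up overlooks.
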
 

There are many affirmative partial answers to Conjecture~$1$ 
(cf.~\cite{Chen, Chen-Ishikawa-1, Chen-Ishikawa-2, Leuven, Dimi, Hasanis-Vlachos}). 
In particular, there are some complete affirmative answers 
if $M$ is one of the following: 
(a) a curve \cite{Dimi}, 
(b) a surface in $\mathbb{E}^3$ \cite{Chen}, 
(c) a hypersurface in $\mathbb{E}^4$ \cite{Leuven, Hasanis-Vlachos}. 

On the other hand, 
since there is no assumption of {\it completeness} for submanifolds in Conjecture~1, 
in a sense it is a problem in {\it local} differential geometry.  
Recently, we reformulated Conjecture~1 into a problem 
in {\it global} differential geometry as the following (cf.~\cite{kasm1, N-U-1, N-U-2}):  

\begin{conj} 
Any {\rm complete} biharmonic submanifold in $\mathbb{E}^N$ is minimal. 
\end{conj}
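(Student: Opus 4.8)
The plan is to convert the biharmonic condition into a single elliptic inequality for $|H|^2$, where $H$ denotes the mean curvature vector of the isometric immersion $\p\colon M^n\to\mathbb{E}^N$, and then to force $|H|^2$ to be constant by a Liouville-type theorem on the complete manifold $M$. First I would use that the tension field of an isometric immersion is $\tau(\p)=nH$ and that the ambient curvature $R^{\mathbb{E}^N}$ vanishes identically; the Euler--Lagrange equation $\tau_2(\p)=0$ then collapses to
\begin{align*}
\lapla^{\p}H=0,
\end{align*}
where $\lapla^{\p}$ is the rough Laplacian of the pulled-back connection. Thus biharmonicity in a flat ambient space is literally the statement that $H$ is a $\lapla^{\p}$-harmonic section of the normal bundle.

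Next I would apply the Weitzenb\"ock identity $\tfrac12\lapla|H|^2=\langle\lapla^{\p}H,H\rangle+|\nabla^{\p}H|^2$, which together with $\lapla^{\p}H=0$ yields
\begin{align*}
\tfrac12\lapla|H|^2=|\nabla^{\p}H|^2\ge0.
\end{align*}
Hence $|H|^2$ is a nonnegative subharmonic function on $M$. The point of this reduction is that once $|H|^2$ is known to be constant the conclusion follows quickly: constancy gives $\nabla^{\p}H\equiv0$, and decomposing the pulled-back derivative by the Weingarten formula $\nabla^{\p}_XH=-A_HX+\nabla^{\perp}_XH$ forces both $\nabla^{\perp}H\equiv0$ and $A_H\equiv0$. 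Since $\mathrm{trace}\,A_H=\langle nH,H\rangle=n|H|^2$, the vanishing of $A_H$ gives $|H|^2\equiv0$, i.e.\ $M$ is minimal.

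It therefore remains to promote subharmonicity to constancy on a complete, possibly noncompact, $M$. In the compact case one simply integrates $\tfrac12\lapla|H|^2=|\nabla^{\p}H|^2$ and reads off $\nabla^{\p}H\equiv0$; the whole problem is to run a cutoff version of this integration on the complete manifold so that the boundary terms are controlled at infinity. The cleanest curvature-free tool for this is Yau's theorem that a nonnegative $L^p$ subharmonic function with $1<p<\infty$ on a complete Riemannian manifold must be constant. Accordingly I would aim to establish an integrability bound $|H|^2\in L^p(M)$ for some $p>1$, which, combined with the subharmonicity above, would close the argument.

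This last step is exactly where the main obstacle lies, and it is the reason the statement remains a conjecture rather than a theorem. Without a bound on the curvature of $M$ or on its volume growth there is no a priori reason for $|H|$ to decay or to be $L^p$-integrable near infinity, and a nonnegative subharmonic function on a general complete noncompact manifold need not be constant. The honest assessment is that completeness by itself gives no handle on the behaviour of $H$ at infinity, so a genuinely new global estimate relating the growth of $|H|^2$ to the volume growth of $M$ would be needed to settle the conjecture as stated. This is precisely why one is led to strengthen the hypothesis to a \emph{proper} immersion (together with the sign condition on the biminimal parameter): properness makes the preimages of Euclidean balls into a genuine exhaustion of $M$ on which the cutoff integration can be performed and the boundary terms absorbed, thereby yielding $\nabla^{\p}H\equiv0$ and hence minimality in that restricted setting.
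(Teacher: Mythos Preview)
You are right that the statement is a \emph{conjecture}: the paper does not prove it, and your honest diagnosis that mere subharmonicity of $|{\bf H}|^2$ on a complete manifold is insufficient is exactly the point. So on the level of the stated target there is nothing to correct.

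Where your discussion diverges from the paper is in the sketch of the \emph{proper-immersion} case. Two substantive differences are worth noting. First, the paper does not stop at $\tfrac12\Delta|{\bf H}|^2=|\nabla^{\p}{\bf H}|^2\ge 0$; it extracts from the normal component of the biharmonic (or biminimal, $\lambda\ge 0$) equation the sharper inequality
\[
\Delta|{\bf H}|^2\ \ge\ \frac{2}{n}\,|{\bf H}|^4,
\]
and it is precisely this quartic lower bound, not bare subharmonicity, that drives the argument. (Your own identity in fact already contains this: $|\nabla^{\p}{\bf H}|^2\ge |A_{\bf H}|^2\ge \tfrac{1}{n}(\mathrm{tr}\,A_{\bf H})^2=n|{\bf H}|^4$, but you do not use it.) Second, the paper does not proceed by cutoff integration or by invoking Yau's $L^p$-Liouville theorem. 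Instead it runs a Cheng--Yau type maximum principle: on the compact set $M\cap{\bf x}^{-1}(\overline{{\bf B}_\rho})$ (compact by properness) one studies the auxiliary function $F_\rho=(\rho^2-|{\bf x}|^2)^2|{\bf H}|^2$, applies $\nabla F_\rho=0$ and $\Delta F_\rho\le 0$ at an interior maximum, uses $\Delta{\bf x}=n{\bf H}$ together with the quartic inequality to get $F_\rho\le c(n)\rho^2$, and lets $\rho\to\infty$ to conclude $|{\bf H}|\equiv 0$. No $L^p$ bound on $|{\bf H}|$ is needed or produced.

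So: your assessment of the full conjecture is correct and matches the paper's stance, but your outline of the proper case is a genuinely different (and vaguer) route than the one the paper takes. The paper's method exploits the Euclidean position vector and the $|{\bf H}|^4$ term directly, which is what buys the conclusion without any integrability hypothesis on ${\bf H}$.
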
 

An immersed submanifold $M$ in $\mathbb{E}^N$ is said to be {\it properly immersed} 
if the immersion $M \rightarrow {\mathbb{E}}^N$ is a proper map. 
K. Akutagawa and the author showed that biharmonic properly immersed submanifold in the Euclidean space is minimal \cite{kasm1}.
Here, we remark that the properness of the immersion implies the completeness of $(M, g)$.

Recently, E. Loubeau and S. Montaldo introduced {\em biminimal immersion} :

\begin{defi}[\cite{elsm1}]
An immersion $\p:(M^n,g)\rightarrow (\tilde{M}^N,\langle \ , \ \rangle)$, $n\leq N$ is called {\em biminimal} if it is a critical point of the functional 
$$E_{2,\lambda}(\p)=E_2(\p)+\lambda E(\p),\ \ \lambda\in \mathbb{R}$$
 for any smooth variation of the map $\p_t (-\epsilon<t<\epsilon)$, $\p_0=\p$ such that $\left. V=\frac{d\p_t}{dt}\right |_{t=0}$ is normal to $\p(M)$.
\end{defi}

The Euler-Lagrange equation for biminimal immersion is
$$[\tau_2(\p)]^{\perp}+\lambda[\tau(\p)]^{\perp}=0,$$
where, $[\cdot]^{\perp}$ denotes the normal component of $[\cdot]$.
We call an immersion {\em free} biminimal if it is biminimal condition for $\lambda=0$.
(It is sometimes called that biminimal is $\lambda$-biminimal and free biminimal is biminimal, respectively).
 If $\p:(M,g)\rightarrow (\tilde{M},\langle \ ,\ \rangle)$ is an isometric immersion, then the biminimal condition is 
\begin{align}\label{biminimal eq}
[-\lapla^{\p}{\bf H}-\sum^n_{i=1}R^{\tilde{M}}({\bf H},d\p(e_i))d\p(e_i)]^{\perp}+\lambda{\bf H}=0,
\end{align}
for some $\lambda \in\mathbb{R}$.
If an isometric immersion $\p$ is biminimal, then $M$ is called {\em biminimal submanifold}.
\begin{rmk}
 we remark that every biharmonic submanifold is free biminimal one.
\end{rmk}

The remaining sections are organized as follows. 
Section~$2$ contains some necessary definitions and preliminary geometric results. 
In section~$3$, we show nonnegative biminimal properly immersed submanifold (that is, a biminimal properly immersed submanifold with $\lambda\geq0$) 
in the Euclidean space is minimal
 and get an affirmative partial answer to Chen's conjecture. 
 In section $\ref{lambda<0}$, we construct examples of biminimal submanifolds and curves for the case of $\lambda<0$. 

\vspace{10pt}

\noindent 
{\bf Acknowledgements.} 
The author would like to thank 
Kazuo Akutagawa for helpful discussions.

\section{\bf Preliminaries}\label{Pre} 

Let $M$ be an $n$-dimensional immersed submanifold in $\mathbb{E}^N$, 
${\bf x} : M \rightarrow \mathbb{E}^N$ its immersion  
and $g$ its induced Riemannian metric. 
For simplicity, we often identify $M$ with its immersed image ${\bf x}(M)$ in every local arguments. 
Let $\nabla$ and $D$ denote respectively the Levi-Civita connections 
of $(M, g)$ and $\mathbb{E}^N = (\mathbb{R}^N, \langle\ ,\ \rangle)$. 
For any vector fields $X, Y \in \frak{X}(M)$, 
the Gauss formula is given by 
$$ 
D_XY = \nabla_XY + h(X, Y), 
$$ 
where $h$ stands for the second fundamental form of $M$ in $\mathbb{E}^N$. 
For any normal vector field $\xi$, the Weingarten map $A_{\xi}$ with respect to $\xi$ 
is given by 
$$ 
D_X\xi = - A_{\xi}X + \nabla^{\perp}_X\xi, 
$$ 
where $\nabla^{\bot}$ stands for the normal connection of the normal bundle of $M$ in $\mathbb{E}^N$. 
It is well known that $h$ and $A$ are related by 
$$ 
\langle h(X, Y), \xi \rangle = \langle A_{\xi}X, Y \rangle. 
$$ 

For any $x \in M$, 
let $\{e_1, \cdots, e_n, e_{n+1}, \cdots, e_N\}$ be an orthonormal basis of $\mathbb{E}^N$ at $x$ 
such that $\{e_1, \cdots, e_n\}$ is an orthonormal basis of $T_xM$. 
Then, $h$ is decomposed as at $x$ 
$$ 
h(X, Y) = \Sigma_{\alpha=n+1}^N h_{\alpha}(X, Y)e_{\alpha}. 
$$ 
The mean curvature vector ${\bf H}$ of $M$ at $x$ is also given by 
$$ 
{\bf H}(x) = \frac{1}{n} \Sigma_{i = 1}^n h(e_i, e_i) = \Sigma_{\alpha=n+1}^N H_{\alpha}(x)e_{\alpha},\qquad 
H_{\alpha}(x) := \frac{1}{n} \Sigma_{i = 1}^n h_{\alpha}(e_i, e_i).  
$$ 
It is well known that the necessary and sufficient conditions for $M$ in $\mathbb{E}^N$ 
to be biharmonic, namely $\Delta {\bf H} = 0$, 
are the following (cf.~\cite{Chen, Chen-Ishikawa-1, Chen-Ishikawa-2}): 
\begin{equation}\label{N-S} 
\begin{cases} 
\ \ \Delta^{\perp} {\bf H} - \Sigma_{i=1}^n h(A_{\bf H}e_i, e_i) = 0, \\ 
\ \ n~\nabla |{\bf H}|^2 + 4~{\rm trace}~A_{\nabla^{\perp} {\bf H}} = 0, \\  
\end{cases} 
\end{equation} 
where $\Delta^{\perp}$ is the (non-positive) Laplace operator associated with the normal connection $\nabla^{\perp}$. 
Similarly, the necessary and sufficient condition for $M$ in $\mathbb{E}^N$ to be biminimal is the following:
\begin{equation}\label{biminimal eu} 
\ \ \Delta^{\perp} {\bf H} - \Sigma_{i=1}^n h(A_{\bf H}e_i, e_i) = \lambda{\bf H}. \\ 
\end{equation}

\vspace{10pt}


\section{Non existence theorem for biminimal submanifold}\label{EC} 
In this section, we show that a {\em nonnegative} biminimal properly immersed submanifold (that is, a biminimal properly immersed submanifold with $\lambda \geq0$) in the Euclidean space is minimal.

From the equation of (\ref{biminimal eu}), we have the following. 

\begin{lemm} 
Let $M = (M, g)$ be a nonnegative biminimal submanifold in $\mathbb{E}^N$. 
Then, the following inequality for $|{\bf H}|^2$ holds 
\begin{equation}\label{key} 
\Delta |{\bf H}|^2 \geq \frac{2}{n} |{\bf H}|^4. 
\end{equation} 
\end{lemm}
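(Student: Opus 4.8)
The plan is to compute $\Delta|{\bf H}|^2$ using the Bochner-type identity and then use the biminimal equation \eqref{biminimal eu} to control the Laplacian term. First I would write, for a local orthonormal frame $\{e_i\}$ adapted to $M$,
\begin{equation*}
\frac{1}{2}\Delta|{\bf H}|^2 = \langle \Delta^{\perp}{\bf H}, {\bf H}\rangle + |\nabla^{\perp}{\bf H}|^2,
\end{equation*}
which is the standard Weitzenböck formula for the section ${\bf H}$ of the normal bundle (here $\Delta^{\perp}$ is the rough Laplacian of the normal connection; since the paper's $\Delta^{\perp}$ is non-positive this is consistent). The term $|\nabla^{\perp}{\bf H}|^2 \geq 0$ is dropped immediately, giving $\frac{1}{2}\Delta|{\bf H}|^2 \geq \langle \Delta^{\perp}{\bf H}, {\bf H}\rangle$.

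Next I would substitute the biminimal condition \eqref{biminimal eu}, namely $\Delta^{\perp}{\bf H} = \sum_{i=1}^n h(A_{\bf H}e_i, e_i) + \lambda{\bf H}$, to obtain
\begin{equation*}
\frac{1}{2}\Delta|{\bf H}|^2 \geq \sum_{i=1}^n \langle h(A_{\bf H}e_i, e_i), {\bf H}\rangle + \lambda|{\bf H}|^2.
\end{equation*}
Since $\lambda \geq 0$ by the nonnegativity hypothesis, the term $\lambda|{\bf H}|^2 \geq 0$ is also dropped. It then remains to show that $\sum_{i=1}^n \langle h(A_{\bf H}e_i, e_i), {\bf H}\rangle \geq \tfrac{1}{n}|{\bf H}|^4$. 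Using $\langle h(X,Y),{\bf H}\rangle = \langle A_{\bf H}X, Y\rangle$, the left side equals $\sum_{i=1}^n \langle A_{\bf H}e_i, A_{\bf H}e_i\rangle = |A_{\bf H}|^2$, the squared Hilbert--Schmidt norm of the symmetric operator $A_{\bf H}$.

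The main obstacle, and the crux of the argument, is the linear-algebra inequality $|A_{\bf H}|^2 \geq \tfrac{1}{n}(\operatorname{trace} A_{\bf H})^2$ together with the identity $\operatorname{trace} A_{\bf H} = \sum_{i=1}^n \langle h(e_i,e_i), {\bf H}\rangle = n|{\bf H}|^2$. The trace identity follows from the definition ${\bf H} = \tfrac{1}{n}\sum_i h(e_i,e_i)$; the inequality is just Cauchy--Schwarz applied to the eigenvalues of the self-adjoint endomorphism $A_{\bf H}$ of $T_xM$ (equivalently, $|A_{\bf H}|^2 = \sum \mu_k^2 \geq \tfrac{1}{n}(\sum\mu_k)^2$). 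Combining, $|A_{\bf H}|^2 \geq \tfrac{1}{n}(n|{\bf H}|^2)^2 = n|{\bf H}|^4$, so $\frac{1}{2}\Delta|{\bf H}|^2 \geq \tfrac{1}{n}\cdot n|{\bf H}|^4$... and here I would double-check the constant: the claimed bound is $\Delta|{\bf H}|^2 \geq \tfrac{2}{n}|{\bf H}|^4$, so I actually need $|A_{\bf H}|^2 \geq \tfrac{1}{n}|{\bf H}|^4$, which is what Cauchy--Schwarz gives when combined correctly with $\operatorname{trace} A_{\bf H} = n|{\bf H}|^2$ — I expect the careful bookkeeping of the factor $n$ to be the one place where an error could slip in, so I would verify it explicitly at the end.
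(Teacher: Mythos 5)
Your proposal is correct and follows essentially the same route as the paper: the Weitzenb\"ock identity for $|{\bf H}|^2$, substitution of the biminimal equation, dropping the nonnegative terms $|\nabla^{\perp}{\bf H}|^2$ and $\lambda|{\bf H}|^2$, and the Cauchy--Schwarz bound on $A_{\bf H}$ (the paper phrases this via $e_N={\bf H}/|{\bf H}|$ and $|h_N|_g^2$, which is the same estimate). Your worry about the constant is unfounded: $\operatorname{trace}A_{\bf H}=n|{\bf H}|^2$ and Cauchy--Schwarz give $|A_{\bf H}|^2\geq n|{\bf H}|^4\geq \tfrac{1}{n}|{\bf H}|^4$, so you in fact obtain the stronger bound $\Delta|{\bf H}|^2\geq 2n|{\bf H}|^4$, which implies \eqref{key}.
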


\begin{proof} 
The equation of (\ref{biminimal eu}) implies that, at each $x \in M$, 
\begin{align}\label{ell-ineq} 
\Delta |{\bf H}|^2 
& = 2~\Sigma_{i=1}^n \langle \nabla_{e_i}^{\perp} {\bf H}, \nabla_{e_i}^{\perp} {\bf H} \rangle 
+ 2~\langle \Delta^{\perp} {\bf H}, {\bf H} \rangle \notag \\ 
& = 2~\Sigma_{i=1}^n \langle \nabla_{e_i}^{\perp} {\bf H}, \nabla_{e_i}^{\perp} {\bf H} \rangle 
+2~\Sigma_{i=1}^n \langle h(A_{\bf H} e_i, e_i), {\bf H} \rangle
+ 2~\lambda \langle {\bf H}, {\bf H} \rangle  \\  
& \geq 2~\Sigma_{i=1}^n \langle A_{\bf H} e_i, A_{\bf H} e_i \rangle .\notag   
\end{align} 
When ${\bf H}(x) \ne 0$, set $e_N := \frac{{\bf H}(x)}{|{\bf H}(x)|}$. 
Then, ${\bf H}(x) = H_N(x) e_N$ and $|{\bf H}(x)|^2 = H_N(x)^2$. 
From (\ref{ell-ineq}), we have at $x$ 
\begin{align}\label{ell-ineq} 
\Delta |{\bf H}|^2 
& \geq 2~H_N^2~\Sigma_{i=1}^n \langle A_{e_N} e_i, A_{e_N} e_i \rangle\notag \\ 
& = 2~|{\bf H}|^2~|h_N|_g^2 \\
& \geq \frac{2}{n}~|{\bf H}|^4 \notag. 
\end{align} 
Even when ${\bf H}(x) = 0$, the above inequality~(\ref{key}) still holds at $x$. 
This completes the proof. 
\end{proof}

 \begin{thm}
Any nonnegative biminimal properly immersed submanifold in $\mathbb{E}^N$ is minimal.
 \end{thm}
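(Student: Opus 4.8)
The plan is to combine the differential inequality $\Delta |{\bf H}|^2 \geq \tfrac{2}{n}|{\bf H}|^4$ from the Lemma with the properness of the immersion, which forces $(M,g)$ to be complete and, more importantly, allows control of the growth of $|{\bf H}|^2$ along geodesic balls via the position vector. First I would set $u := |{\bf H}|^2$, a nonnegative smooth function on $M$ satisfying $\Delta u \geq \tfrac{2}{n}u^2$. The goal is to show $u \equiv 0$. The natural tool is a Liouville-type / Omori--Yau style maximum principle, or more concretely the Keller--Osserman argument for the semilinear inequality $\Delta u \geq c\,u^2$: if a complete manifold carries a nonnegative solution of such an inequality with sufficiently controlled volume growth, then $u$ must vanish. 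So the crux is to verify the geometric hypothesis (volume growth of geodesic balls, or a bound on $u$ itself) that makes the Keller--Osserman or Yau-type conclusion applicable.

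The key step where properness enters: for a properly immersed submanifold, the extrinsic distance function $r(x) = |{\bf x}(x)|$ is proper on $M$, and one has the standard comparison $\Delta r^2 = 2n(1 + \langle {\bf x}^{\perp}, {\bf H}\rangle)$ — or, more robustly, $\Delta |{\bf x}|^2 = 2n + 2n\langle {\bf H}, {\bf x}\rangle$ where ${\bf x}$ is the position vector. This gives a handle on the intrinsic geometry from the extrinsic embedding. I would use a cutoff function $\varphi$ supported on the extrinsic ball $B_R = \{|{\bf x}| \leq R\}$, equal to $1$ on $B_{R/2}$, with $|\nabla \varphi| \lesssim 1/R$, and test the inequality $\Delta u \geq \tfrac{2}{n}u^2$ against $\varphi^{2} u$ (or against $\varphi^{2p}$ for a suitable power). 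Integrating by parts,
\begin{align*}
\frac{2}{n}\int_M \varphi^{2} u^{2}\, v_g \leq \int_M \varphi^{2} u \,\Delta u\, v_g = -\int_M \langle \nabla(\varphi^{2} u), \nabla u\rangle\, v_g,
\end{align*}
and after absorbing $\int \varphi^{2}|\nabla u|^2$ into the left using Cauchy--Schwarz with a parameter, one is left with $\tfrac{1}{n}\int_M \varphi^{2} u^{2} \lesssim \int_M |\nabla \varphi|^2 u \lesssim \tfrac{1}{R^2}\int_{B_R} u$. To close the estimate I need $\tfrac{1}{R^2}\int_{B_R} u \to 0$, i.e. a bound $\int_{B_R} |{\bf H}|^2 = o(R^2)$; equivalently one can iterate the Caccioppoli-type inequality (a $p$-th power trick) to weaken what is needed on the growth side. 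Letting $R \to \infty$ then yields $\int_M u^2 = 0$, hence $u \equiv 0$ and $M$ is minimal.

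The main obstacle I anticipate is establishing the required growth control $\int_{B_R}|{\bf H}|^2 = o(R^2)$ (or whatever the iterated estimate demands) purely from properness, without any a priori curvature or volume bound. One route around this: rather than controlling $\int_{B_R} u$ directly, iterate — feed the inequality $\Delta u \geq \tfrac2n u^2$ into itself, testing against higher powers $\varphi^{2}u^{q}$ for $q>1$, which self-improves the integrability of $u$ and eventually needs only that the manifold be complete (this is essentially the Keller--Osserman mechanism, where the superlinearity $u^2$ does the work and no volume hypothesis is needed at all). The cleanest formulation is: a complete Riemannian manifold admits no positive $C^2$ function $u$ with $\Delta u \geq c u^{1+\alpha}$, $\alpha>0$, other than $u\equiv 0$ when one also controls growth, but in fact for the pure inequality on a complete manifold one invokes a known nonexistence result (Yau, or Cheng--Yau) for $L^\infty$ solutions and a stochastic-completeness / parabolicity argument to handle the general case. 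I would therefore structure the proof as: (i) invoke the Lemma; (ii) recall properness $\Rightarrow$ completeness; (iii) apply the appropriate Liouville/Keller--Osserman theorem for $\Delta u \geq \tfrac2n u^2$ on a complete manifold to conclude $|{\bf H}|^2 \equiv 0$. The delicate point to get right is exactly which completeness-only (versus growth-assuming) nonexistence statement is available and correctly cited, since the superlinear term is what makes the completeness-only version plausible.
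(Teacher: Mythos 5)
Your setup is right --- you isolate the inequality $\Delta u \ge \tfrac2n u^2$ for $u=|{\bf H}|^2$, you note the identity $\Delta|{\bf x}|^2 = 2n + 2n\langle{\bf H},{\bf x}\rangle$, and you see that properness of the immersion is the hypothesis that must carry the argument. But the proof as structured has a genuine gap at the final step. The Caccioppoli route, testing against $\varphi^2 u$, leaves you with $\int\varphi^2u^3 \lesssim \int u^2|\nabla\varphi|^2 \lesssim R^{-2}\int_{B_R}u^2$, and to close this (even after iterating the power trick) you need control of $\mathrm{Vol}(M\cap {\bf B}_R)$ or of $\int_{B_R}u^k$; properness gives you neither --- a properly immersed submanifold can have extrinsic balls of arbitrarily fast-growing volume. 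Your fallback, ``invoke a completeness-only Liouville/Keller--Osserman theorem for $\Delta u\ge c u^2$,'' is not available: the nonexistence of nonzero bounded solutions of $\Delta u \ge cu^2$ on a complete manifold is equivalent to the weak maximum principle at infinity, which requires stochastic completeness (hence a volume-growth condition of the type $\int^\infty r\,dr/\log\mathrm{Vol}(B_r)=\infty$); complete manifolds violating it do carry nonzero bounded solutions. So the superlinearity alone does not do the work, and the hedge you flag at the end is exactly where the argument breaks.

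The paper closes the gap with a pointwise Cheng--Yau-type maximum principle rather than an integral estimate, and this is where properness is used in an essential (not merely ``completeness-implying'') way. One sets $F_\rho(x)=(\rho^2-|{\bf x}(x)|^2)^2\,u(x)$ on $M\cap{\bf x}^{-1}(\overline{{\bf B}_\rho})$, which is \emph{compact} precisely because the immersion is proper, and vanishes on the boundary; hence $F_\rho$ attains an interior maximum at some $p$ with $\nabla F=0$ and $\Delta F\le 0$ there. Feeding in $\Delta u\ge\tfrac2n u^2$, $\Delta|{\bf x}|^2\le 2n+2n|{\bf H}||{\bf x}|$ and $|\nabla|{\bf x}|^2|_g^2\le 4n|{\bf x}|^2$ yields $F_\rho(p)\le c(n)\rho^2$, so $u(x)\le c(n)\rho^2/(\rho^2-|{\bf x}(x)|^2)^2$ for all $x$ in the extrinsic $\rho$-ball, and letting $\rho\to\infty$ gives $u\equiv0$ with no volume hypothesis whatsoever. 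If you want to salvage your outline, replace step (iii) by this localized maximum-principle computation; the Omori--Yau direction you mention in passing is the one that actually works here.
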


\begin{proof}
If $M$ is compact, applying the standard maximum principle to the elliptic inequality (\ref{key}), 
we have that ${\bf H} = 0$ on $M$. 
Therefore, we may assume that $M$ is noncompact. 
Suppose that ${\bf H}(x_0) \ne 0$ at some point $x_0 \in M$. 
Then, we will lead a contradiction. 

Set 
$$ 
u(x) := |{\bf H}(x)|^2\quad {\rm for}\ \ x \in M. 
$$  
For each $\rho > 0$, consider the function 
$$ 
F(x) = F_{\rho}(x) := (\rho^2 - |{\bf x}(x)|^2)^2 u(x)\quad {\rm for}\ \ 
x \in M \cap {\bf x}^{-1}\big{(} \overline{{\bf B}_{\rho}} \big{)}.  
$$ 
Then, there exists $\rho_0 > 0$ such that $x_0 \in {\bf x}^{-1}\big{(} {\bf B}_{\rho_0} \big{)}$. 
For each $\rho \geq \rho_0$, 
$F = F_{\rho}$ is a nonnegative function which is not identically zero 
on $M \cap {\bf x}^{-1}\big{(} \overline{{\bf B}_{\rho}} \big{)}$. 
Take any $\rho \geq \rho_0$ and fix it. 
Since $M$ is properly immersed in $\mathbb{E}^N$, $M \cap {\bf x}^{-1}\big{(} \overline{{\bf B}_{\rho}} \big{)}$ is compact. 
By this fact combined with $F = 0$ on $M \cap {\bf x}^{-1} \big{(} \partial\overline{{\bf B}_{\rho}} \big{)}$, 
there exists a maximum point $p \in M \cap {\bf x}^{-1}\big{(} {\bf B}_{\rho} \big{)}$ of $F = F_{\rho}$ such that $F(p) > 0$. 
We have $\nabla F = 0$ at $p$, and hence 
\begin{equation}\label{grad}
\frac{\nabla u}{u} = \frac{2~\nabla |{\bf x}(x)|^2}{\rho^2 - |{\bf x}(x)|^2}\quad {\rm at}\ \ p. 
\end{equation} 
We also have that $\Delta F \leq 0$ at $p$. 
Combining this with (\ref{grad}), we obtain 
\begin{equation}\label{Lap} 
\frac{\Delta u}{u} \leq \frac{6~|\nabla |{\bf x}(x)|^2|_g^2}{(\rho^2 - |{\bf x}(x)|^2)^2} 
+ \frac{2~\Delta |{\bf x}(x)|^2}{\rho^2 - |{\bf x}(x)|^2}\quad {\rm at}\ \ p.   
\end{equation}   
From $\Delta {\bf x}=n{\bf H}$, we note 
\begin{equation}\label{sub} 
\begin{cases} 
\ \ \Delta |{\bf x}(x)|^2 = 2~\Sigma_{i=1}^n |\nabla_{e_i} {\bf x}(x)|^2 + 2~\langle \Delta {\bf x}(x), {\bf x}(x) \rangle 
\leq 2 n + 2 n |{\bf H}|\cdot |{\bf x}(x)|, \\ 
\ \ |\nabla |{\bf x}(x)|^2|_g^2 \leq 4 n |{\bf x}(x)|^2.   
\end{cases} 
\end{equation} 
It then follows from (\ref{key}), (\ref{Lap}) and (\ref{sub}) that 
$$ 
u(p) \leq \frac{12 n^2 |{\bf x}(p)|^2}{(\rho^2 - |{\bf x}(p)|^2)^2} 
+ \frac{2 n^2 (1 + \sqrt{u(p)} |{\bf x}(p)|)}{\rho^2 - |{\bf x}(p)|^2},  
$$ 
and hence 
$$ 
F(p) \leq 12 n^2 |{\bf x}(p)|^2 + 2 n^2 (\rho^2 - |{\bf x}(p)|^2) + 2 n^2 \sqrt{F(p)} |{\bf x}(p)|.  
$$ 
Therefore, there exists a positive constant $c(n) > 0$ depending only on $n$ such that 
$$ 
F(p) \leq c(n) \rho^2. 
$$ 

Since $F(p)$ is the maximum of $F = F_{\rho}$, we have 
$$ 
F(x) \leq F(p) \leq c(n) \rho^2\quad {\rm for}\ \ x \in M \cap {\bf x}^{-1}\big{(} \overline{{\bf B}_{\rho}} \big{)}, 
$$ 
and hence 
\begin{equation}\label{Final} 
|{\bf H}(x)|^2 = u(x) \leq \frac{c(n) \rho^2}{(\rho^2 - |{\bf x}(x)|^2)^2}\quad 
{\rm for}\ \ x \in M \cap {\bf x}^{-1}\big{(} {\bf B}_{\rho} \big{)}\quad {\rm and}\ \ \rho \geq \rho_0. 
\end{equation} 
Letting $\rho \nearrow \infty$ in (\ref{Final}) for $x = x_0$, 
we have that 
$$ 
|{\bf H}(x_0)|^2 = 0. 
$$  
This contradicts our assumption that ${\bf H}(x_0) \ne 0$.  
Therefore, $M$ is minimal. 
\end{proof}  

Especially, any free biminimal properly immersed submanifold in $\mathbb{E}^N$ is minimal.
From the equations $(\ref{N-S})$, we have:
\begin{cor}[\cite{kasm1}]
Any biharmonic properly immersed submanifold in $\mathbb{E}^N$ is minimal.
\end{cor}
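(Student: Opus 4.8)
The plan is to combine the differential inequality from the key Lemma with a cutoff argument tailored to the extrinsic geometry of the immersion, in the spirit of the treatment of the biharmonic case in \cite{kasm1}. Set $u := |{\bf H}|^2$; by~(\ref{key}) it is a nonnegative function satisfying $\Delta u \geq \frac{2}{n}u^2$ on all of $M$, and it suffices to prove $u \equiv 0$. If $M$ is compact this is immediate, since at a maximum point $p$ of $u$ one has $\Delta u \leq 0$ at $p$, hence $\frac{2}{n}u(p)^2 \leq 0$, so $u(p) = 0$ and therefore $u \equiv 0$. Thus I would reduce at once to the noncompact case and argue by contradiction, assuming ${\bf H}(x_0) \neq 0$ for some $x_0 \in M$.

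For the noncompact case, the idea is that properness makes the extrinsic balls the right tool: $M \cap {\bf x}^{-1}(\overline{{\bf B}_{\rho}})$ is compact for every $\rho > 0$. I would introduce the weighted function $F_{\rho}(x) := (\rho^2 - |{\bf x}(x)|^2)^2\, u(x)$ on this set; it is nonnegative, vanishes on the boundary $\{|{\bf x}| = \rho\}$, and is strictly positive at $x_0$ once $\rho$ is large, so it attains a positive maximum at some interior point $p = p_{\rho}$. At $p$ the first-order condition $\nabla F_{\rho} = 0$ yields $\nabla u/u = 2\nabla|{\bf x}|^2/(\rho^2 - |{\bf x}|^2)$, and the second-order condition $\Delta F_{\rho} \leq 0$, after substituting this identity, bounds $\Delta u/u$ at $p$ from above in terms of $|\nabla|{\bf x}|^2|_g^2$, $\Delta|{\bf x}|^2$, and powers of $\rho^2 - |{\bf x}(p)|^2$. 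Using $\Delta {\bf x} = n{\bf H}$ one has the elementary estimates $\Delta|{\bf x}|^2 \leq 2n + 2n|{\bf H}|\,|{\bf x}|$ and $|\nabla|{\bf x}|^2|_g^2 \leq 4n|{\bf x}|^2$; inserting these together with $\Delta u \geq \frac{2}{n}u^2$ and clearing the denominator $(\rho^2 - |{\bf x}(p)|^2)^2$ produces an inequality of the form $F_{\rho}(p) \leq A(n)\rho^2 + B(n)\rho\sqrt{F_{\rho}(p)}$, which is quadratic in $\sqrt{F_{\rho}(p)}$ and forces $F_{\rho}(p) \leq c(n)\rho^2$ for a constant depending only on $n$. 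Since $F_{\rho}(p)$ is the maximum of $F_{\rho}$, it follows that $u(x) \leq c(n)\rho^2/(\rho^2 - |{\bf x}(x)|^2)^2$ for every $x$ with $|{\bf x}(x)| < \rho$; applying this with $x = x_0$ and letting $\rho \to \infty$ forces $u(x_0) = 0$, contradicting ${\bf H}(x_0) \neq 0$. Hence $M$ is minimal.

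The main obstacle is the noncompact case, and in particular getting the exponents to balance. Two points are delicate: first, one must localize on the extrinsic balls ${\bf x}^{-1}(\overline{{\bf B}_{\rho}})$ rather than on intrinsic balls — it is precisely properness, not mere completeness of $(M,g)$, that makes these compact and lets $F_{\rho}$ have an interior maximum with which to work; second, the exponent $2$ in the weight $(\rho^2 - |{\bf x}|^2)^2$ is calibrated so that the contributions of $\Delta|{\bf x}|^2$ and $|\nabla|{\bf x}|^2|_g^2$ enter at order $\rho^2$ while the denominator at a fixed point grows like $\rho^4$, which is exactly what kills $u(x_0)$ in the limit. I would also note that the sign hypothesis $\lambda \geq 0$ is used only to obtain~(\ref{key}) in the first place — there one discards the nonnegative term $2\lambda|{\bf H}|^2$ in the expansion of $\Delta|{\bf H}|^2$ — so if $\lambda < 0$ the function $u$ need not be a subsolution and the whole scheme fails, consistent with the examples built later in the paper. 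Finally, since a biharmonic submanifold satisfies the first equation of~(\ref{N-S}), which is~(\ref{biminimal eu}) with $\lambda = 0$, it is in particular nonnegative biminimal, so this result contains the biharmonic case \cite{kasm1} as a special case.
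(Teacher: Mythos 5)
Your proposal is correct and follows essentially the same route as the paper: the corollary is obtained by observing that a biharmonic submanifold satisfies the first equation of~(\ref{N-S}), i.e.~(\ref{biminimal eu}) with $\lambda=0$, hence is nonnegative (free) biminimal, and then invoking the main theorem, whose proof via the weighted function $F_{\rho}=(\rho^2-|{\bf x}|^2)^2|{\bf H}|^2$ on the extrinsic balls you reproduce faithfully. The only difference is one of presentation: the paper cites its Theorem~3.2 directly, whereas you re-derive it in full before specializing to $\lambda=0$.
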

This corollary gives an affirmative partial answer to Chen's conjecture.

\vspace{10pt}

\section{Biminimal submanifold with $\lambda<0$}\label{lambda<0}
For the case of $\lambda <0$, we shall construct biminimal submanifolds.

\begin{prop}[\cite{elsm1}]
Let $\p:M^n\rightarrow \mathbb{E}^{n+1}$ be an isometric immersion with ${\bf H}=H_{n+1}{e_{n+1}}$ its mean curvature vector. Then $M$
 is biminimal if and only if 
 $$\lapla H_{n+1}=(|h|^2+\lambda)H_{n+1},$$
for some value of $\lambda$ in $\mathbb{R}$.
\end{prop}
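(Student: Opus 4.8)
The plan is to reduce the general biminimal equation~(\ref{biminimal eu}) to a scalar equation on $M$ by using the codimension-one structure equations; since every manipulation below is an equivalence, this simultaneously proves both directions of the ``if and only if''.

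First I would work with a (locally defined) unit normal field $\xi=e_{n+1}$, so that ${\bf H}=H_{n+1}\xi$ with $H_{n+1}$ a smooth function. Because the normal bundle of a hypersurface is a line bundle and $\langle\xi,\xi\rangle\equiv1$, one has $\langle\nabla^{\perp}_X\xi,\xi\rangle=\tfrac12 X\langle\xi,\xi\rangle=0$, hence $\nabla^{\perp}\xi\equiv0$. Therefore $\nabla^{\perp}_X{\bf H}=(XH_{n+1})\xi$ and, from the definition of the rough (normal) Laplacian, $\Delta^{\perp}{\bf H}=(\lapla H_{n+1})\,\xi$. (If $M$, or its normal bundle, is not orientable, $\xi$ exists only locally; but changing the sign of $\xi$ changes the sign of $H_{n+1}$ while leaving $\lapla H_{n+1}=(|h|^2+\lambda)H_{n+1}$ invariant, so this causes no difficulty.)

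Next I would rewrite the zeroth-order term $\sum_{i=1}^{n}h(A_{\bf H}e_i,e_i)$. Linearity of $A_{\xi}$ in $\xi$ gives $A_{\bf H}=H_{n+1}A$, where $A:=A_{\xi}$ is the (self-adjoint) shape operator. For a hypersurface $h(X,Y)=\langle AX,Y\rangle\,\xi$, so
$$\sum_{i=1}^{n}h(A_{\bf H}e_i,e_i)=H_{n+1}\sum_{i=1}^{n}\langle A(Ae_i),e_i\rangle\,\xi=H_{n+1}\Bigl(\sum_{i=1}^{n}\langle A^2e_i,e_i\rangle\Bigr)\xi=H_{n+1}\,|h|^2\,\xi,$$
using $\mathrm{trace}\,A^2=\sum_{i,j}\langle Ae_i,e_j\rangle^2=|h|^2$.

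Finally I would substitute these two identities into~(\ref{biminimal eu}). All terms involved are already normal, so $[\,\cdot\,]^{\perp}$ acts as the identity and the equation becomes $(\lapla H_{n+1})\xi-H_{n+1}|h|^2\xi=\lambda H_{n+1}\xi$; pairing with $\xi$ shows this holds, for a given $\lambda\in\mathbb R$, precisely when $\lapla H_{n+1}=(|h|^2+\lambda)H_{n+1}$. I do not expect a genuine obstacle here: the argument is essentially the bookkeeping of the Gauss--Weingarten equations for a hypersurface, and the only points needing care are the sign/normalization conventions relating $\Delta^{\perp}$ to $\lapla$ and the (inessential) orientability remark above.
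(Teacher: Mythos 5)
Your argument is correct. Note that the paper itself gives no proof of this proposition --- it is quoted from Loubeau--Montaldo \cite{elsm1} --- so there is nothing to compare against; but your derivation is exactly the natural one given the paper's setup: you specialize the general biminimal condition~(\ref{biminimal eu}) to codimension one, using $\nabla^{\perp}\xi=0$ to get $\Delta^{\perp}{\bf H}=(\lapla H_{n+1})\xi$, and $\sum_i h(A_{\bf H}e_i,e_i)=H_{n+1}\,\mathrm{trace}(A^2)\,\xi=H_{n+1}|h|^2\xi$, and the signs are consistent with the paper's (non-positive) Laplacian convention as used in Lemma~3.1. The only implicit dependence is on equation~(\ref{biminimal eu}) itself, which the paper also states without proof; granting that, every step of yours is an equivalence and both directions follow.
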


From this proposition, if $M$ is a non-trivial biminimal submanifold with harmonic mean curvature, then $\lambda<0.$

\begin{cor}
Let $\p:M^n\rightarrow \mathbb{E}^{n+1}$ be an isometric immersion with harmonic mean curvature. If  M is free biminimal, then it is minimal.
\end{cor}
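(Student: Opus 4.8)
The plan is to derive the corollary directly from the preceding Proposition. Since $\phi:M^n\rightarrow\mathbb{E}^{n+1}$ is an isometric immersion of codimension one with mean curvature vector ${\bf H}=H_{n+1}e_{n+1}$, and \emph{free} biminimal means biminimal with $\lambda=0$, the Proposition applied with $\lambda=0$ tells us that the free biminimal condition is equivalent to the single scalar equation
\[
\lapla H_{n+1}=|h|^2\,H_{n+1}\qquad\text{on }M.
\]
First I would record this reformulation; no computation is needed beyond invoking the Proposition.

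Next I would bring in the hypothesis that the mean curvature is harmonic, which I read --- consistently with the notation of the Proposition --- as $\lapla H_{n+1}=0$ on $M$. Substituting this into the displayed equation yields $|h|^2\,H_{n+1}\equiv 0$ on $M$. It then remains a purely pointwise matter to conclude: on the open set $U:=\{x\in M:H_{n+1}(x)\neq 0\}$ the identity $|h|^2 H_{n+1}=0$ forces $|h|^2=0$, hence the entire second fundamental form vanishes on $U$; but then $H_{n+1}=\frac{1}{n}\,\mathrm{trace}\,h=0$ on $U$, which contradicts the definition of $U$ unless $U=\emptyset$. Therefore $H_{n+1}\equiv 0$, i.e. $\phi$ is minimal.

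I do not anticipate any genuine obstacle here: in contrast with the Theorem above, this corollary uses neither completeness nor properness of the immersion nor the maximum principle --- only the elementary fact that for a hypersurface the full second fundamental form vanishes wherever the scalar mean curvature function does, combined with the two equations $\lapla H_{n+1}=|h|^2 H_{n+1}$ (free biminimality) and $\lapla H_{n+1}=0$ (harmonic mean curvature). The single point that deserves a word of care is to make explicit that ``harmonic mean curvature'' is taken to mean $\lapla H_{n+1}=0$, with the same sign convention for $\lapla$ as in the Proposition; once that is fixed, the argument is immediate and self-contained.
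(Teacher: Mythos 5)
Your proof is correct and is essentially the argument the paper intends (the corollary is stated without proof, as an immediate consequence of the preceding Proposition with $\lambda=0$ and $\Delta H_{n+1}=0$). The pointwise step $|h|^2H_{n+1}\equiv 0\Rightarrow H_{n+1}\equiv 0$, via $h=0$ forcing $H_{n+1}=0$ on the set where $H_{n+1}\neq 0$, is exactly right and needs no completeness or maximum principle.
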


\begin{cor}
Let $\p:M^n\rightarrow \mathbb{E}^{n+1}$ be an isometric immersion with harmonic mean curvature. Then $M$
 is non-trivial biminimal if and only if 
 $$|h|^2=-\lambda,$$
for $\lambda(<0) \in \mathbb{R}$.
\end{cor}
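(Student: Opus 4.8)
The strategy is simply to substitute the hypothesis into the preceding Proposition. By that Proposition, $\p$ is biminimal if and only if $\lapla H_{n+1}=(|h|^2+\lambda)H_{n+1}$ for some $\lambda\in\R$. Since $\p$ has harmonic mean curvature we have $\lapla H_{n+1}=0$, so biminimality of $\p$ becomes equivalent to the pointwise identity $(|h|^2+\lambda)H_{n+1}\equiv 0$ on $M$ for some $\lambda\in\R$, and the whole corollary will be extracted from this identity, the decisive step taking place on the open set where $H_{n+1}$ does not vanish.

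For the ``only if'' direction, assume $\p$ is non-trivial biminimal, i.e.\ not minimal, so that $U:=\{p\in M:H_{n+1}(p)\neq 0\}$ is a nonempty open subset of $M$. On $U$ the identity above forces $|h|^2+\lambda=0$, hence $|h|^2=-\lambda$ is constant there. For a hypersurface one has $|h|^2=\sum_i\kappa_i^2$ in terms of the principal curvatures $\kappa_i$, so Cauchy--Schwarz gives $|h|^2\geq\frac1n\bigl(\sum_i\kappa_i\bigr)^2=nH_{n+1}^2$, which is strictly positive on $U$; therefore $-\lambda>0$, i.e.\ $\lambda<0$ (in particular $\lambda=0$ is impossible, which re-proves the previous Corollary). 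Finally $|h|^2=-\lambda$ propagates to $\overline U$ by continuity, and on a connected $M$ one concludes $|h|^2=-\lambda$ on all of $M$; more conservatively, the identity $|h|^2=-\lambda$ certainly holds on the support of $H_{n+1}$.

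For the ``if'' direction, suppose $|h|^2=-\lambda$ for some constant $\lambda<0$. Then $|h|^2+\lambda\equiv 0$, so $(|h|^2+\lambda)H_{n+1}=0=\lapla H_{n+1}$ and the Proposition gives at once that $\p$ is biminimal. Since $|h|^2=-\lambda>0$ everywhere, $h$ vanishes nowhere, so $\p$ is not (a piece of) a hyperplane and $\lambda\neq 0$; combined with the previous Corollary (a free biminimal immersion with harmonic mean curvature is minimal), this is the precise sense in which such a $\p$ is a non-trivial biminimal immersion. The only genuinely delicate point is this matter of what ``non-trivial'' should mean, together with the passage in the forward direction from the locus $\{H_{n+1}\neq 0\}$ --- where everything is pure linear algebra --- to the whole of $M$; every remaining step is an immediate substitution into the Proposition.
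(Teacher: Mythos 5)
The paper gives no proof of this corollary at all --- it is presented as an immediate consequence of the preceding Proposition once $\Delta H_{n+1}=0$ is substituted --- and your argument is exactly that intended reasoning, carried out with more care than the paper itself. The two delicate points you flag are real imprecisions in the statement rather than defects of your proof: the identity $|h|^2=-\lambda$ is only forced on $\overline{\{H_{n+1}\neq 0\}}$, and in the converse direction $|h|^2=-\lambda>0$ only rules out the totally geodesic case (a minimal, non--totally geodesic hypersurface with constant $|h|^2$ would still be a ``trivial'' biminimal immersion), so the equivalence as literally stated requires the reading of ``non-trivial'' that you adopt.
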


Using this result, we obtain following:

\begin{prop}
The isometric immersion $\p:S^{n}\left(\sqrt{\frac{n}{-\lambda}}\right)\rightarrow \mathbb{E}^{n+1},$ $(\lambda<0)$
 is non-trivial biminimal.  
\end{prop}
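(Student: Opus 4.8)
The plan is to verify the characterization from the preceding Corollary directly for the round sphere $S^n\!\left(\sqrt{n/(-\lambda)}\right)\subset\mathbb E^{n+1}$. By that Corollary, an isometric immersion $\p:M^n\to\mathbb E^{n+1}$ with harmonic mean curvature is non-trivial biminimal precisely when $|h|^2=-\lambda$; so it suffices to check two things for the sphere of radius $r:=\sqrt{n/(-\lambda)}$: first, that its mean curvature function $H_{n+1}$ is harmonic (indeed constant), so the Corollary applies; and second, that $|h|^2=-\lambda$ for this particular radius.

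First I would recall the elementary second fundamental form data for a round hypersphere. For $S^n(r)\subset\mathbb E^{n+1}$ with the outward unit normal $e_{n+1}$, the shape operator is $A_{e_{n+1}}=\frac{1}{r}\,\mathrm{id}$, hence all principal curvatures equal $1/r$, the mean curvature function is the constant $H_{n+1}=1/r$ (in particular $\lapla H_{n+1}=0$, so it is harmonic and $M$ is certainly non-trivial, i.e.\ not minimal), and $|h|^2=\sum_{i=1}^n(1/r)^2=n/r^2$.

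Next I would substitute $r^2=n/(-\lambda)$ to get $|h|^2=n/r^2=n\cdot\frac{-\lambda}{n}=-\lambda$. Since $\lambda<0$ this is a positive quantity, consistent with $|h|^2>0$, and it is exactly the condition $|h|^2=-\lambda$ of the Corollary. Therefore $\p:S^n(\sqrt{n/(-\lambda)})\to\mathbb E^{n+1}$ is non-trivial biminimal, with that value of $\lambda$. Alternatively, and as a consistency check, one can bypass the Corollary and plug directly into Proposition (Loubeau--Montaldo): with $H_{n+1}=1/r$ constant one has $\lapla H_{n+1}=0$, while $(|h|^2+\lambda)H_{n+1}=(n/r^2+\lambda)(1/r)$, and this vanishes iff $n/r^2=-\lambda$, i.e.\ iff $r=\sqrt{n/(-\lambda)}$ — giving the same conclusion.

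There is essentially no obstacle here: the only thing to be slightly careful about is the sign and normalization conventions — that $|h|^2$ denotes $\sum_{i=1}^n\kappa_i^2$ (the squared norm of the full second fundamental form, not the traceless part), that $H_{n+1}$ is the \emph{normalized} mean curvature $\frac1n\sum\kappa_i$ as fixed in Section~2, and that the Laplacian $\lapla$ on the constant function is zero with whatever sign convention is in force. Once these conventions are pinned down, the computation is immediate and the Proposition follows.
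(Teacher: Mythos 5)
Your proof is correct and follows essentially the same route as the paper: compute the shape operator of the round sphere, note $|h|^2=n/r^2=-\lambda$, and invoke the preceding Corollary (the paper uses the inward normal so its shape operator carries a minus sign, which does not affect $|h|^2$). Your additional remarks — explicitly checking that the constant mean curvature is harmonic so the Corollary applies, and the direct consistency check against the Loubeau--Montaldo equation — only make the argument more complete than the paper's two-line version.
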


\begin{proof}
In this case, $A=-\frac{1}{\sqrt{\frac{n}{-\lambda}}}I$, where $I$ is the identity transformation. Therefore, we have  
$|h|^2=n\frac{1}{{\sqrt{\frac{n}{-\lambda}}}^2}=-\lambda.$
\end{proof}

For the curve case, we shall construct biminimal curves.

\begin{defi}[\cite{elsm1}]
The Frenet frame $\{B_i\}_{i=1,\cdots ,N}$ associated with a curve $\gamma :I\subset \mathbb{R}\rightarrow (\tilde{M},\langle \ , \ \rangle)$ is the orthonormalization of the 
$(N+1)$-tuple
$\left\{\nabla^{\gamma (k)}_{\frac{\partial}{\partial t}} d\gamma \left(\frac{\partial}{\partial t}\right)\right\}_{k=0,\cdots ,N}$
described by
\begin{align*}
B_1=&d\gamma \left( \frac{\partial}{\partial t}\right),\\
\nabla^{\gamma}_{\frac{\partial}{\partial t}} B_1=&k_1B_2,\\
\nabla^{\gamma}_{\frac{\partial}{\partial t}} B_i=&-k_{i-1}B_{i-1}+k_iB_{i+1}\ \ \ ^\forall i=2,\cdots ,N-1,\\
\nabla^{\gamma}_{\frac{\partial}{\partial t}} B_N=&-k_{N-1}B_{N-1},
\end{align*}
where the functions $\{k_1>0, k_2, k_3,\cdots , k_{N-1}\}$ are called the curvatures of $\gamma.$
Note that $B_1=\gamma'$ is the unit tangent vector field to the curve.
\end{defi}

Biminimal curves in a Euclidean space are characterized as follows.
\begin{prop}[\cite{elsm1}]
Let $\gamma:I\subset \mathbb{R} \rightarrow \mathbb{E}^N$, $N\geq2,$ be a curve parametrized by arc length from an open interval of $\mathbb{R}$ 
 into a Euclidean space $\mathbb{E}^N$.  Then $\gamma $ is biminimal if and only if $k_i$ fulfill the system:
 \begin{equation}
 \begin{cases}
 k''_1-k^3_1-k_1k_2^2-\lambda k_1=0,\\
 k_1^2k_2=\text{constant},\\
 k_1k_2k_3=0.
 \end{cases}
 \end{equation}
\end{prop}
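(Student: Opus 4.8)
The plan is to substitute the arc-length parametrized curve $\gamma$ into the Euler--Lagrange equation $[\tau_2(\gamma)]^{\perp}+\lambda[\tau(\gamma)]^{\perp}=0$ and to read off the three scalar equations by expanding every term in the Frenet frame $\{B_i\}$. First I would record that, since $t$ is an arc-length parameter, $\nabla_{\frac{\partial}{\partial t}}\frac{\partial}{\partial t}=0$ on $I$, so the rough Laplacian along $\gamma$ reduces to $\lapla^{\gamma}=\nabla^{\gamma}_{\frac{\partial}{\partial t}}\nabla^{\gamma}_{\frac{\partial}{\partial t}}$, and the tension field is
\begin{align*}
\tau(\gamma)=\nabla^{\gamma}_{\frac{\partial}{\partial t}}d\gamma\Big(\frac{\partial}{\partial t}\Big)=\nabla^{\gamma}_{\frac{\partial}{\partial t}}B_1=k_1B_2,
\end{align*}
which is already normal to $\gamma$, so $[\tau(\gamma)]^{\perp}=k_1B_2$. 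Since $\mathbb{E}^N$ is flat, the curvature term in $\tau_2(\gamma)$ vanishes, leaving $\tau_2(\gamma)=-\nabla^{\gamma}_{\frac{\partial}{\partial t}}\nabla^{\gamma}_{\frac{\partial}{\partial t}}(k_1B_2)$.

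Next I would differentiate $k_1B_2$ twice using the Frenet equations. One differentiation gives $\nabla^{\gamma}_{\frac{\partial}{\partial t}}(k_1B_2)=-k_1^2B_1+k_1'B_2+k_1k_2B_3$; a second differentiation, after collecting terms and rewriting the $B_3$-coefficient by means of the identity $2k_1'k_2+k_1k_2'=(k_1^2k_2)'/k_1$ (valid since $k_1>0$), yields
\begin{align*}
-\tau_2(\gamma)=\nabla^{\gamma}_{\frac{\partial}{\partial t}}\nabla^{\gamma}_{\frac{\partial}{\partial t}}(k_1B_2)=-3k_1k_1'B_1+(k_1''-k_1^3-k_1k_2^2)B_2+\frac{(k_1^2k_2)'}{k_1}B_3+k_1k_2k_3B_4.
\end{align*}
Taking the normal component discards the $B_1$-term, so the biminimal condition $[\tau_2(\gamma)]^{\perp}+\lambda[\tau(\gamma)]^{\perp}=0$ becomes
\begin{align*}
-(k_1''-k_1^3-k_1k_2^2)B_2-\frac{(k_1^2k_2)'}{k_1}B_3-k_1k_2k_3B_4+\lambda k_1B_2=0.
\end{align*}
Since $B_2,B_3,B_4$ are mutually orthogonal, this is equivalent to the vanishing of each of the three coefficients, i.e. to $k_1''-k_1^3-k_1k_2^2-\lambda k_1=0$, $(k_1^2k_2)'=0$, and $k_1k_2k_3=0$, which is exactly the stated system (adopting the convention $k_i\equiv 0$ for $i\geq N$, so that when $N=2$ or $N=3$ the equations involving $k_2$ or $k_3$ hold automatically).

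The computation is entirely routine; the only points that require a little care are the simplification of the $B_3$-coefficient into the conservation form $(k_1^2k_2)'=0$ and the bookkeeping in low codimension, where the Frenet frame has fewer than four members and one or two of the three equations are vacuous. I do not anticipate any genuine obstacle.
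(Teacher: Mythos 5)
Your proposal is correct: the Frenet-frame expansion of $\tau_2(\gamma)$ is computed accurately, the coefficient of $B_3$ is rightly packaged as $(k_1^2k_2)'/k_1$, and reading off the $B_2,B_3,B_4$ components gives exactly the stated system. The paper itself offers no proof (the proposition is quoted from Loubeau--Montaldo), and your argument is the standard derivation given in that reference.
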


When $\lambda <0$, using this proposition, we construct a example of biminimal curves.
\begin{ex}
We consider the curve 
$$\gamma (s)=\frac{1}{\sqrt{-\lambda}}\left\{\cos(\sqrt{-\lambda}s) c_1+\sin(\sqrt{-\lambda}s)c_2\right\}+c_3,\ \ (\lambda<0),$$
where, $c_1,c_2$ are constant vectors orthogonal to each other with $|c_1|^2=|c_2|^2=1$, and $c_3$ is a constant vector.
 Direct computation shows that the curve is non-trivial biminimal.
\end{ex}

\quad \\ 
\quad \\

\bibliographystyle{amsbook}

\begin{thebibliography}{99} 
\bibitem{kasm1}
 K.~Akutagawa and S.~Maeta,
 {\em Biharmonic properly immersed submanifold in the Euclidean space},
 Preprint.
\bibitem {Chen} B.-Y.~Chen, 
    {\it Some open problems and conjectures on submanifolds of finite type}, 
    Michigan State University, (1988~version).  
\bibitem {Chen-Ishikawa-1} B.-Y.~Chen and S.~Ishikawa, 
    {\it Biharmonic surfaces in pseudo-Euclidean spaces}, 
    Memoirs Fac.\ Sci., Kyushu Univ., Ser.~A {\bf 45} (1991), 323--347. 
\bibitem {Chen-Ishikawa-2} B.-Y.~Chen and S.~Ishikawa, 
    {\it Biharmonic pseudo-Riemannian submanifolds in pseudo-Euclidean spaces}, 
    Kyushu J.\ Math.\ {\bf 52} (1998), 167--185. 
\bibitem {Cheng-Yau} S.-Y.~Cheng and S.-T.~Yau, 
    {\it Maximal space-like hypersurfaces in the Lorentz-Minkowski spaces}, 
    Ann.\ of Math.\ {\bf 104} (1976), 407--419. 
\bibitem {Leuven} F.~Defever, 
    {\it Hypersurfaces of} $\mathbb{E}^4$ {\it with harmonic mean curvature vector}, 
     Math.\ Nachr.\ {\bf 196} (1998), 61--69. 
\bibitem {Dimi} I.~Dimitri\'{c}, 
    {\it Submanifolds of } $\mathbb{E}^m$ {\it with harmonic mean curvature vector}, 
    Bull.\ Inst.\ Math.\ Acad.\ Sinica {\bf 20} (1992), 53--65. 
\bibitem{jell1}
 J. Eells and L. Lemaire, 
 {\em Selected topics in harmonic maps},
 CBMS, {\bf 50}, Amer. Math. Soc, 1983.
\bibitem {Hasanis-Vlachos} Th.~Hasanis and Th.~Vlachos, 
    {\it Hypersurfaces in} $\mathbb{E}^4$ {\it with harmonic mean curvature vector field}, 
    Math.\ Nachr.\ {\bf 172} (1995), 145--169. 
\bibitem{ji2}
 J.~Inoguchi,
 {\em Biminimal submanifolds in contact 3-manifolds},
 Balkan J. Geom. Appl. {\bf 12} (2007), 56-67.
\bibitem{jg1}
 G. Y. Jiang, 
 {\em 2-harmonic maps and their first and second variational formulas},
 Chinese Ann. Math. , {\bf 7}A (1986), 388-402; the English translation, Note di Mathematica, {\bf 28} (2008), 209-232.
 \bibitem{knt1}
 S. Kobayashi and K. Nomizu, 
 {\em Foundation of Differential Geometry, Vol. I, II},
  (1963), (1969), John Wiley and Sons, New York.
 \bibitem{elsm1}
 E.~Loubeau and S.~Montaldo,
 {\em Biminimal immersion}, 
 Proc. Edinb. Math. Soc. {\bf 51} (2008), 421-437.
\bibitem {N-U-1} N.~Nakauchi and H.~Urakawa, 
    {\it Biharmonic hypersurfaces in a Riemannian manifold with non-positive Ricci curvature}, 
    to appear in Ann.\ Global Anal.\ Geom.
\bibitem {N-U-2} N.~Nakauchi and H.~Urakawa, 
    {\it Biharmonic submanifolds in a Riemannian manifold with non-positive curvature}, 
    to appear in Results. Math. doi:10.1007/s00025-011-0209-7. 
\bibitem {Yau} S.-T.~Yau, 
    {\it Harmonic functions on complete Riemannian manifolds}, 
     Comm.\ Pure Appl.\ Math.\ {\bf 28} (1975), 201--228. 
\end{thebibliography}

\vspace{20mm} 

\end{document}